\documentclass{article}
\usepackage[
journal=JNCG,
lang=british,
]{ems-journal}
\makeatletter
\renewcommand*\ps@titlepage{%
  \let\@oddfoot\@empty
  \let\@evenfoot\@empty
  \let\@oddhead\@empty
  \let\@evenhead\@empty
}
\makeatother
\usepackage{tikz-cd}      
\usetikzlibrary{matrix}

\theoremstyle{plain}
\newtheorem{theorem}{Theorem}[section]
\newtheorem{conj}[theorem]{Conjecture}

\newtheorem{question}[theorem]{Question}
\newtheorem{lemma}[theorem]{Lemma}
\newtheorem{proposition}[theorem]{Proposition}

\theoremstyle{definition}
\newtheorem{definition}[theorem]{Definition}

\theoremstyle{remark}
\newtheorem{remark}[theorem]{Remark}

\numberwithin{equation}{section}

\newcommand{\ZZ}{\mathbb{Z}}

\newcommand{\cC}{\mathcal{C}}

\newcommand{\cB}{\mathcal{B}}

\numberwithin{equation}{section}

\begin{document}





\emsauthor{2}{
	\givenname{Yeqin}
	\surname{Liu}
	\mrid{1591694}
	\orcid{0000-0001-8231-7682}}{Y.~Liu}
\emsauthor{3}{
	\givenname{Yu}
	\surname{Shen}
	\mrid{}
	\orcid{0000-0001-5766-1596}}{Y.~Shen}
\emsauthor{4}{
	\givenname{Ziyu}
	\surname{Zhang}
	\mrid{}
	\orcid{0000-0001-8692-4842}}{Z. ~Zhang}


\Emsaffil{2}{
  \department{Department of Mathematics}
  \organisation{University of Michigan}
  \rorid{00jmfr291}
  \address{530 Church St}
  \zip{48109}
  \city{Ann Arbor, MI}
  \country{USA}
  \affemail{yqnl@umich.edu}
}

\Emsaffil{3}{
  \department{Department of Mathematics}
  \organisation{Michigan State University}
  \rorid{05hs6h993}
  \address{619 Red Cedar Road}
  \zip{48824}
  \city{East Lansing, MI}
  \country{USA}
  \affemail{shenyu5@msu.edu}
}

\Emsaffil{4}{
  \department{Insitute of Mathematical Science}
  \organisation{ShanghaiTech University}
  \rorid{}
  \address{No. 393 Middle Huaxia Road}
  \zip{201210}
  \city{Pudong New Area, Shanghai}
  \country{China}
  \affemail{zhangziyu@shanghaitech.edu.cn}
}

\classification[14C30]{14F08}

\keywords{Cartan determinant conjecture; smooth proper dg categories; numerical Grothendieck groups; Euler pairings; numerical Chow groups; Hodge standard conjecture}

\begin{abstract}
    We formulate a dg-categorical analogue of the Cartan determinant conjecture using numerical Grothendieck groups. Let $\mathcal{D}$ be a smooth proper dg category over an algebraically closed field $k$. The Euler pairing on $K_0(\mathcal{D})$ descends to a nondegenerate bilinear form on the numerical Grothendieck group $K_{\mathrm{num}}(\mathcal{D})$, and hence defines a numerical Cartan determinant $\det M_\chi(\mathcal{D})$. While the classical Cartan determinant conjecture predicts that $\det M_\chi(A)=1$ for finite-dimensional smooth algebras $A$, this equality is no longer expected for general smooth proper dg categories. However, we propose the following positivity question: $\det M_\chi(\mathcal{D})>0.$

Our main result proves this positivity when $\mathcal{D}=\operatorname{Perf}(X)$ for a large class of smooth projective varieties $X$. More precisely, we show that $\det M_\chi(X)>0$ if either $\dim X$ is odd, or $X$ has even dimension and satisfies the numerical Hodge standard conjecture.

\end{abstract}

\title{Cartan Determinants in algebraic geometry}
\maketitle
\section{Introduction}
In this paper, we work over an algebraically closed field $k$. Let $\mathcal{D}$ be a smooth proper dg category over $k$. We define
$
K_0(\mathcal{D}):=K_0(\operatorname{Perf}(\mathcal{D})),
$
where $\operatorname{Perf}(\mathcal{D})$ is the triangulated category of perfect right $\mathcal{D}$-modules. Since $\mathcal{D}$ is proper, $K_0(\mathcal{D})$ is equipped with the Euler bilinear pairing
$
\chi_{\mathcal{D}}:K_0(\mathcal{D})\times K_0(\mathcal{D})\longrightarrow \mathbb Z,
$
given by
$$
\chi_{\mathcal{D}}([M],[N])
:=
\sum_{n\in \mathbb Z}(-1)^n
\dim_k \operatorname{Hom}_{\operatorname{Perf}(\mathcal{D})}(M,N[n]).
$$
Since $\mathcal{D}$ is smooth, the left and right kernels of $\chi_{\mathcal{D}}$ coincide (\cite[4.24]{tabuada2015noncommutative}). We denote the kernel by
$$
\operatorname{Ker}(\chi_{\mathcal{D}})
=
\left\{
x\in K_0(\mathcal{D})
\ \middle|\
\chi_{\mathcal{D}}(x,y)=0
\text{ for all } y\in K_0(\mathcal{D})
\right\}.
$$
The \emph{numerical Grothendieck group} of $\mathcal{D}$ is then defined as
$$
K_{\mathrm{num}}(\mathcal{D})
:=
K_0(\mathcal{D})/\operatorname{Ker}(\chi_{\mathcal{D}}).
$$
In fact, $K_{\mathrm{num}}(\mathcal{D})$ is a finitely generated free abelian group; see \cite[Section~6]{tabuada2016noncommutative} for $\operatorname{char} k=0$ and \cite[Theorem~6.1]{tabuada2019noncommutative} for $\operatorname{char} k>0$. We now define the numerical Cartan matrix of $\mathcal{D}$.
\begin{definition}
For a $\mathbb Z$-basis $e_1,\dots,e_r$ of $K_{\mathrm{num}}(\mathcal{D})$, we define the \emph{numerical Cartan matrix} of $\mathcal{D}$ with respect to $e_\bullet$ by
$$
M_\chi(e_\bullet):=\bigl(\chi(e_i,e_j)\bigr)_{i,j}.
$$
\end{definition}
If $e'_\bullet$ is another $\mathbb Z$-basis of $K_{\mathrm{num}}(\mathcal{D})$, then
$
M_\chi(e'_\bullet)=P^t M_\chi(e_\bullet)P
$
for some $P\in \mathrm{GL}_r(\mathbb Z)$. Hence
$$
\det M_\chi(e'_\bullet)
=(\det P)^2\det M_\chi(e_\bullet)
=\det M_\chi(e_\bullet),
$$
so $\det M_\chi(e_\bullet)$ is independent of the choice of basis. 
Since we are only interested in the determinant of the numerical Cartan matrix, we will suppress the choice of basis and write $M_\chi(\mathcal{D})$ for $M_\chi(e_\bullet)$.

\subsection{The classical Cartan determinant conjecture}
Let $A$ be a finite-dimensional $k$-algebra which is (cohomologically) smooth, i.e., $A$ is perfect as a right $A\otimes_k A^{\mathrm{op}}$-module. In this case,
$
K_0(A):=K_0(\operatorname{Perf} A)=K_{\mathrm{num}}(A),
$
and
$
\det(M_\chi(A)):=\operatorname{det}(M_{\chi}(\operatorname{Perf} A))=\pm 1;
$
see \cite[Proposition~21]{eilenberg1954algebras}. Zacharia conjectured that this determinant is always equal to $1$.

\begin{conj}[Cartan determinant conjecture. {\cite{zacharia1983cartan}}]\label{cartan determinat conjecture}

Let $A$ be a finite-dimensional smooth $k$-algebra. Then we have
$
\det(M_\chi(A))=1.
$
\end{conj}

Although Conjecture~\ref{cartan determinat conjecture} holds for certain finite-dimensional smooth algebras $A$ \cite{zacharia1983cartan,wilson1983cartan,burgess1985cartan,burgess1989quasi,saorin1998monoid,qin2016reducing,ingalls2020homological,green2021algebras}, it remains wide open in general.

Now let $A$ be a smooth finite-dimensional dg algebra. By \cite[Corollary~2.20]{orlov2020finite}, $\operatorname{Perf}(A)$ is an admissible subcategory of a triangulated category $\mathcal{B}$ which has a full exceptional collection. So we have
$
\det M_\chi(\mathcal{B})=1.
$
Since $\cB$ admits a semiorthogonal decomposition
$\mathcal{B}=\langle \operatorname{Perf}(A),\mathcal{C}\rangle $ for a residual category $\cC$,
we have
$
1=\det M_\chi(\mathcal{B})
=
\det M_\chi(A)\det M_\chi(\mathcal{C}).
$ Since $M_{\chi}(A)\in \ZZ$, we have
$
\det M_\chi(A)=\pm 1.
$ It is therefore natural for us to formulate the following dg analogue of Conjecture~\ref{cartan determinat conjecture}.

\begin{question}
    Let $A$ be a finite-dimensional smooth dg $k$-algebra. Is $\det(M_\chi(A)) =1$?
\end{question}

\subsection{Numerical Cartan determinant in algebraic geometry}
We may push the Cartan determinant question further.
For a general smooth proper dg category, the equality $\det M_\chi=1$ is no longer expected. Indeed, if $S$ is a K3 surface with $\operatorname{NS}(S)=\mathbb ZH$ and $H^2=2\ell$, where $\ell\geq 1$, then
$$
\det M_\chi(S):=\det M_\chi(\operatorname{Perf}(S))=2\ell\neq 1.
$$
However, observe that this determinant is still positive. This suggests the following question.

\begin{question}\label{smooth proper}
Let $\mathcal{D}$ be a smooth proper dg category over $k$. Is 
$\det M_\chi(\mathcal{D})>0?$
\end{question}

We give an affirmative answer to Question~\ref{smooth proper} in the case where $\mathcal{D}=\operatorname{Perf}(X)$, for a large class of smooth projective varieties $X$, in the following main theorem.

\begin{theorem}[{Theorem~\ref{odd case}, Theorem~\ref{even case}}]
Let $X$ be a smooth projective variety over $k$, and let $M_\chi$ be the numerical Cartan matrix of $X$. Then
$\det M_\chi(X):=\det M_\chi(\operatorname{Perf}(X))>0$
in the following cases:
\begin{enumerate}
    \item $\dim X$ is odd;
    \item $\dim X$ is even, and $X$ satisfies the numerical Hodge standard conjecture.
\end{enumerate}
\end{theorem}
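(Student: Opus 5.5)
Our strategy is to pass from the Euler pairing on $K_{\mathrm{num}}(X)$ to the intersection pairing on numerical Chow groups, and to read off the sign of the determinant from the signature of that pairing.

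\textbf{Reduction to numerical Chow groups.} Set $n=\dim X$ and $A^i_{\mathrm{num}}(X)$ for cycles modulo numerical equivalence. The Chern character combined with the Todd class gives
\[
v:K_0(X)\otimes\mathbb Q\to A^*_{\mathrm{num}}(X)_{\mathbb Q},\qquad v(E)=\mathrm{ch}(E)\sqrt{\mathrm{td}_X}.
\]
By Hirzebruch--Riemann--Roch,
\[
\chi(E,F)=\int_X \mathrm{ch}(E^\vee)\mathrm{ch}(F)\mathrm{td}_X=\int_X v(E)^\vee v(F),
\]
where $(-)^\vee$ acts by $(-1)^i$ on degree $i$. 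In positive characteristic, where the square root may be problematic, we can work instead with $\mathrm{ch}(E)\mathrm{td}_X$ paired against $\mathrm{ch}(F)$; only the determinant's sign matters.

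It follows that $\mathrm{Ker}(\chi)$ is exactly the kernel of $\mathrm{ch}$ modulo numerical equivalence. Hence $K_{\mathrm{num}}(X)_{\mathbb Q}\cong A^*_{\mathrm{num}}(X)_{\mathbb Q}$, and $\chi$ corresponds to the form
\[
B(x,y)=\int_X x^\vee\, y
\]
twisted by a unipotent change of basis. Since $\det M_\chi$ is a nonzero integer, its sign equals the sign of the Gram determinant of $B$ in any $\mathbb Q$-basis. A rational change of basis multiplies the determinant by a square, and a unipotent one, coming from the Todd class or $\exp$ of higher-degree terms, does not change it.

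\textbf{Block structure.} Choose bases of each $A^i:=A^i_{\mathrm{num}}(X)_{\mathbb Q}$. The form $B$ pairs $A^i$ with $A^{n-i}$ with the sign $(-1)^i$, so the Gram matrix is block anti-diagonal. Write $r_i=\dim A^i$ and let $N_i$ be the intersection matrix $A^i\times A^{n-i}\to\mathbb Q$, so that $N_{n-i}=N_i^t$.

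\textbf{Odd $n$.} The blocks pair up as $\{i,n-i\}$ with $i<n-i$, and there is no middle block. Each pair contributes a $2\times 2$ block-antidiagonal factor
\[
\begin{pmatrix}0&(-1)^iN_i\\ (-1)^{n-i}N_i^t&0\end{pmatrix},
\]
whose determinant is
\[
(-1)^{r_i}\cdot(-1)^{r_i}\cdot(-1)^{r_i}\det N_i\,(-1)^{(n-i)r_i}\det N_i .
\]
Since $i$ and $n-i$ have opposite parity, the signs combine to $(-1)^{r_i^2+r_i}\cdot(\det N_i)^2\cdot(\text{a sign})$, and I expect this to be $+(\det N_i)^2$. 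The routine sign bookkeeping should be done carefully, keeping track of the anti-diagonal permutation sign $(-1)^{r_i^2}$ and of the product of $(-1)^{ir_i}$ and $(-1)^{(n-i)r_i}$, which is $(-1)^{nr_i}=(-1)^{r_i}$ because $n$ is odd. Then $(-1)^{r_i^2}(-1)^{r_i}=+1$. This gives positivity in the odd case.

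\textbf{Even $n=2m$.} The pairs $i\neq m$ contribute positive factors by the same computation, since the product $(-1)^{ir_i}(-1)^{(n-i)r_i}$ equals $+1$ and the permutation sign combines with the sign of the $\det$ of an anti-diagonal square block to give $+(\det N_i)^2$ up to the same check. The middle block is $(-1)^m Q$, where $Q$ is the intersection form on $A^m$. So
\[
\operatorname{sign}\det M_\chi=(-1)^{m r_m}\operatorname{sign}\det Q .
\]
Now we use the numerical Hodge standard conjecture together with hard Lefschetz for numerical classes (the Lefschetz decomposition, which should be part of the hypothesis or follow from it numerically). On the primitive part $P^j\subset A^j$, $j\le m$, the form $(-1)^j\int L^{n-2j}xy$ is positive definite. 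Decompose $A^m=\bigoplus_{j\le m}L^{m-j}P^j$ orthogonally. Then $Q$ has sign $(-1)^j$ on the summand $L^{m-j}P^j$, so
\[
\operatorname{sign}\det Q=(-1)^{\sum_j j\,p_j},\qquad p_j=\dim P^j .
\]
Including the factor $(-1)^{m r_m}$ with $r_m=\sum_j p_j$, the total sign becomes $(-1)^{\sum_j (m+j)p_j}$.

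\textbf{Main obstacle.} This exponent is not obviously even, and it is the step I expect to be hardest. The way through is to redo the count using the full Gram matrix in Lefschetz-adapted bases rather than isolating the middle block. Each $L^{k}P^j$ appears in degrees $j+k$ for $0\le k\le n-2j$, and these summands pair among themselves. For each $j$ one computes the determinant of the resulting $(n-2j+1)$-block anti-diagonal string. Using the Hodge--Riemann signs, I expect each such string to contribute a positive factor. The key input is that the middle sign $(-1)^{m}$ in $B$ cancels against the Hodge--Riemann sign $(-1)^j$ combined with the parity of $m-j$ pairs. I would verify this per primitive piece, which reduces the problem to a one-variable $\mathfrak{sl}_2$-string computation.
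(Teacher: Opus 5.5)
Your reduction to the form $B$ and your odd-dimensional sign count, once cleaned up, are correct and match the paper. One small slip there: since $\mathrm{td}_X^\vee=\mathrm{td}_X\,e^{-c_1(X)}$, one has $\chi(E,F)=\int_X v(E)^\vee v(F)\,e^{c_1(X)/2}$, not $\int_X v(E)^\vee v(F)$. The extra factor is unipotent, so the sign is unaffected.

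\textbf{The gap: the even case rests on a false step.} For $n=2m$ and $i<m$, the block on $A^i\oplus A^{n-i}$ has determinant $(-1)^{r_i^2}(-1)^{nr_i}(\det N_i)^2=(-1)^{r_i}(\det N_i)^2$, not $+(\det N_i)^2$. In the odd case the anti-diagonal permutation sign $(-1)^{r_i^2}$ was cancelled by $(-1)^{nr_i}$; for even $n$ nothing cancels it. Already for a surface, the block on $A^0\oplus A^2$ is $\begin{pmatrix}0&1\\1&0\end{pmatrix}$, with determinant $-1$.

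Consequently your formula $\operatorname{sign}\det M_\chi=(-1)^{mr_m}\operatorname{sign}\det Q$ is wrong. For a surface it gives $(-1)^{\rho}(-1)^{\rho-1}=-1$, contradicting the K3 example $\det M_\chi=2\ell$. The exponent you flag as the main obstacle is also genuinely not even. Indeed $\sum_j(m+j)p_j\equiv\sum_j(m-j)p_j$, and using $p_j=r_j-r_{j-1}$ this telescopes to exactly $r_0+\cdots+r_{m-1}$. The missing observation, which is how the paper finishes, is that this middle-block sign cancels exactly against the product $\prod_{i<m}(-1)^{r_i}$ of the correctly computed outer blocks.

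\textbf{The Lefschetz-string fallback.} This route is sound, but you only state it as an expectation. It does go through, in three steps:
\begin{enumerate}
\item Numerical hard Lefschetz, which follows from the Hodge standard conjecture as in Proposition~\ref{Hodge}, gives $A^*=\bigoplus_{j\le m}\bigoplus_{k=0}^{n-2j}L^kP^j$.
\item Strings with different $j$ are $B$-orthogonal, by the same argument the paper uses for the summands of $A^m$.
\item Fix a basis $e_a$ of $P^j$ and set $N=n-2j+1$, $p=p_j$, and $G=(\int_X L^{n-2j}e_ae_b)=(-1)^jG_+$ with $G_+$ positive definite. The Gram matrix of $B$ in the basis $L^ke_a$ is block anti-diagonal with blocks $(-1)^{j+k}G$. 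Its determinant is $(-1)^{p\binom{N}{2}}\cdot(-1)^{p(Nj+\binom{N}{2})}\cdot(-1)^{Njp}(\det G_+)^N=(\det G_+)^N>0$.
\end{enumerate}
Written out, this would give a uniform, string-by-string proof of the even case that sidesteps the middle/outer sign cancellation. As submitted, though, the even case is not proved, and its main line asserts something false.
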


In Remark~\ref{hold case}, we give a list of cases in which the numerical Hodge standard conjecture is known to hold. It is also natural to ask when the numerical Euler lattice is unimodular.

\begin{question}\label{unimodular question}
Let $X$ be a smooth projective variety over $k$. Which geometric properties of $X$ imply
$
\det M_\chi(X)=\pm 1?
$
Under what additional conditions is $\det M_\chi(X)=1$?
\end{question}

This question also gives an obstruction to the existence of numerical exceptional collections of maximal length. Indeed, if $E_1,\ldots,E_r$ is such a collection, then its Euler matrix is upper triangular with all diagonal entries equal to $1$. If $L\subset K_{\mathrm{num}}(X)$ is the sublattice generated by the classes $[E_i]$, then
$
1=[K_{\mathrm{num}}(X):L]^2\det M_\chi(X).
$
Since $\det M_\chi(X)$ is an integer, it follows that $L=K_{\mathrm{num}}(X)$ and $\det M_\chi(X)=1$. Thus $\det M_\chi(X)\neq 1$ obstructs numerical exceptional collections of maximal length. Lattice-theoretic restrictions on such collections have already been studied for surfaces, for example through the N\'eron--Severi lattice in \cite{vial2017exceptional}.

More generally, if a smooth proper dg category admits a semiorthogonal decomposition
$
\mathcal{D}=\langle \mathcal{A}_1,\ldots,\mathcal{A}_m\rangle,
$
then the numerical Cartan matrix is block upper triangular with diagonal blocks $M_\chi(\mathcal{A}_i)$. Hence
$
\det M_\chi(\mathcal{D})=\prod_{i=1}^m\det M_\chi(\mathcal{A}_i).
$
In particular, unimodularity of $K_{\mathrm{num}}(\mathcal{D})$ imposes corresponding restrictions on the numerical lattices of the components of any semiorthogonal decomposition.

\subsection{Acknowledgment}
We thank Alexander Perry for many useful discussions. 
Z.~Zhang is supported by National Natural Science Foundation of China (No.~12371046).

\section{The numerical Cartan determinant for smooth projective varieties}
Let $X$ be a smooth projective variety over $k$. Let $K_0(X)$ denote the Grothendieck group of coherent sheaves on $X$, and let $\mathrm{CH}^*(X)$ denote its Chow ring. It is well known that the Chern character defines an isomorphism
$$
\operatorname{ch}:K_0(X)_{\mathbb Q}
\xrightarrow{\sim}
\mathrm{CH}^*(X)_{\mathbb Q};
$$
see, for example, \cite[Example 15.2.16, (b)]{FultonIntersectionTheory}. Let
$$
N_K(X):=\left\{x\in K_0(X)\ \middle|\ 
\chi(x,y)=0 \text{ for all } y\in K_0(X)\right\},
$$
and
$$
N_{\mathrm{CH}}(X):=\left\{\alpha\in \mathrm{CH}^*(X)\ \middle|\ 
\int_{X}\alpha \beta=0 \text{ for all } \beta\in \mathrm{CH}^*(X)\right\}.
$$
Here $\chi(-,-)$ is the Euler pairing on $K_{0}(X)$. Recall that the numerical Grothendieck group of $X$ is
$$
K_{\mathrm{num}}(X):=K_0(X)/N_K(X),
$$
and the numerical Chow group is
$$
N^*(X):=\mathrm{CH}^*(X)/{\equiv_{\mathrm{num}}}
=\mathrm{CH}^*(X)/N_{\mathrm{CH}}(X).
$$
Since $\operatorname{Perf}(X)$ is a smooth proper dg category, $K_{\mathrm{num}}(X)$ is a finitely generated free abelian group.

For the projective case, the following lemma also appears in \cite[Lemma~2.19]{kuznetsov2018derived}.
\begin{lemma}\label{iso}
Let $X$ be a smooth projective variety over $k$. Then the Chern character
$$
\operatorname{ch}:K_0(X)_{\mathbb Q}\xrightarrow{\sim} \mathrm{CH}^*(X)_{\mathbb Q}
$$
identifies $N_K(X)_{\mathbb{Q}}$ with $N_{\mathrm{CH}}(X)_{\mathbb{Q}}$. Hence it descends to an isomorphism
$$
\overline{\operatorname{ch}}:
K_{\mathrm{num}}(X)_{\mathbb Q}
\xrightarrow{\sim}
N^{*}(X)_{\mathbb{Q}}.
$$ 
\end{lemma}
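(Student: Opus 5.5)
We will prove the lemma via Hirzebruch--Riemann--Roch, which expresses the Euler pairing as a twisted intersection pairing. For $x,y\in K_0(X)$,
$$
\chi(x,y)=\int_X \operatorname{ch}(x^\vee)\operatorname{ch}(y)\operatorname{td}(X),
$$
where $x\mapsto x^\vee$ is the duality involution on $K_0(X)$ (well defined since $X$ is smooth, so every class is a combination of vector bundle classes). This holds in any characteristic by Grothendieck--Riemann--Roch applied to $X\to \operatorname{Spec} k$; see \cite[Corollary~15.2.1]{FultonIntersectionTheory}. We also use $\operatorname{ch}(x^\vee)=\operatorname{ch}(x)^\vee$, where $(-)^\vee$ on $\mathrm{CH}^*(X)_{\mathbb Q}$ acts as $(-1)^i$ on $\mathrm{CH}^i$.

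Next we show that $\operatorname{ch}(N_K(X)_{\mathbb Q})=N_{\mathrm{CH}}(X)_{\mathbb Q}$. Since $N_K(X)$ is the kernel of a bilinear form, $N_K(X)_{\mathbb Q}$ is the kernel of the extended form on $K_0(X)_{\mathbb Q}$, and likewise for $N_{\mathrm{CH}}$. Consider the map $\Phi:\mathrm{CH}^*(X)_{\mathbb Q}\to\mathrm{CH}^*(X)_{\mathbb Q}$ given by $\beta\mapsto \beta\cdot\operatorname{td}(X)$. It is a linear automorphism, since $\operatorname{td}(X)=1+(\text{nilpotent})$ is a unit in the graded ring. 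The map $\alpha\mapsto\alpha^\vee$ is also an automorphism. As $x$ and $y$ range over $K_0(X)_{\mathbb Q}$, the class $\operatorname{ch}(y)\operatorname{td}(X)$ ranges over all of $\mathrm{CH}^*(X)_{\mathbb Q}$, because $\operatorname{ch}$ is an isomorphism. Hence $x\in N_K(X)_{\mathbb Q}$ if and only if $\int_X\operatorname{ch}(x)^\vee\beta=0$ for all $\beta$, that is, if and only if $\operatorname{ch}(x)^\vee\in N_{\mathrm{CH}}(X)_{\mathbb Q}$.

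It remains to see that $N_{\mathrm{CH}}(X)_{\mathbb Q}$ is stable under $\vee$. If $\alpha$ is numerically trivial, then so is each homogeneous component $\alpha_i$: pairing $\alpha$ against $\beta\in \mathrm{CH}^{n-i}$ only sees $\alpha_i$, where $n=\dim X$. Therefore $\alpha^\vee=\sum(-1)^i\alpha_i$ is also numerically trivial. This gives $\operatorname{ch}(N_K(X)_{\mathbb Q})=N_{\mathrm{CH}}(X)_{\mathbb Q}$.

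For the quotient statement, localization is exact, so $K_{\mathrm{num}}(X)_{\mathbb Q}=K_0(X)_{\mathbb Q}/N_K(X)_{\mathbb Q}$ and $N^*(X)_{\mathbb Q}=\mathrm{CH}^*(X)_{\mathbb Q}/N_{\mathrm{CH}}(X)_{\mathbb Q}$. An isomorphism that carries one subspace onto the other induces an isomorphism of the quotients. The main point needing care is the validity of Riemann--Roch in positive characteristic, which is Fulton's statement for Chow groups and holds for arbitrary $k$. The rest is formal linear algebra.
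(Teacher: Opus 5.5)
Your proposal is correct and follows essentially the same route as the paper: Hirzebruch--Riemann--Roch, invertibility of $\operatorname{td}(X)$, and stability of numerical triviality under the sign involution $\vee$. The only cosmetic difference is how you check the $\vee$-stability: you observe that each homogeneous component of a numerically trivial class is itself numerically trivial, whereas the paper uses the identity $\int_X \alpha^\vee\beta=(-1)^d\int_X\alpha\,\beta^\vee$.
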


\begin{proof}
By the Hirzebruch--Riemann--Roch theorem, for $x,y\in K_0(X)_{\mathbb Q}$, we have
$$
\chi(x,y)=\int_X \operatorname{ch}(x^\vee)\operatorname{ch}(y)\operatorname{td}(X).
$$
Note that $\operatorname{td}(X)$ is invertible in $\mathrm{CH}^*(X)_{\mathbb Q}$. Therefore, as $y$ varies, the classes $\operatorname{ch}(y)\operatorname{td}(X)$
run through all of $\mathrm{CH}^*(X)_{\mathbb Q}$. Hence
$$
x\in N_K(X)_{\mathbb{Q}}
\quad\Longleftrightarrow\quad
\operatorname{ch}(x^\vee)\in N_{\mathrm{CH}}(X)_{\mathbb{Q}}.
$$

Now write
$$
\operatorname{ch}(x)=\sum_i \alpha_i,
\qquad
\alpha_i\in \mathrm{CH}^i(X)_{\mathbb Q}.
$$
Then $\operatorname{ch}(x^\vee)=\sum_i (-1)^i\alpha_i.$
The sign involution
$$
\sum_i \alpha_i\longmapsto \sum_i (-1)^i\alpha_i
$$
preserves numerical triviality. Indeed, if $d=\dim X$, then for any $\beta\in \mathrm{CH}^*(X)_{\mathbb Q}$,
$$
\int_{X}\left(\sum_i (-1)^i\alpha_i\right)\beta
=
(-1)^d\int_{X}\left(\sum_i \alpha_i\right)
\left(\sum_j (-1)^j\beta_j\right),
$$
where $\beta=\sum_j \beta_j$ with $\beta_j\in \mathrm{CH}^j(X)_{\mathbb Q}$. Since $\beta$ varies over all of $\mathrm{CH}^*(X)_{\mathbb Q}$, so does $\sum_j(-1)^j\beta_j$. Thus
$$
\operatorname{ch}(x^\vee)\in N_{\mathrm{CH}}(X)_{\mathbb{Q}}
\quad\Longleftrightarrow\quad
\operatorname{ch}(x)\in N_{\mathrm{CH}}(X)_{\mathbb{Q}}.
$$
Therefore
$$
x\in N_K(X)_{\mathbb{Q}}
\quad\Longleftrightarrow\quad
\operatorname{ch}(x)\in N_{\mathrm{CH}}(X)_{\mathbb{Q}}.
$$
So $\operatorname{ch}$ identifies the numerical radicals on both sides, and hence descends to the desired isomorphism.
\end{proof}

Let $\alpha=\sum_i \alpha_i\in N^*(X)_{\mathbb Q}$, where $\alpha_i\in N^i(X)_{\mathbb Q}$. We define
$
\alpha^\vee:=\sum_i (-1)^i\alpha_i.
$
Let $B(-,-)$ be the bilinear form on $N^*(X)_{\mathbb Q}$ defined by
$$
B(\alpha,\beta):=\int_X \alpha^\vee\beta.
$$
Suppose that $\alpha_1,\dots,\alpha_r$ is a $\mathbb Q$-basis of $N^*(X)_{\mathbb Q}$. We define
$$
M_B(\alpha_\bullet):=\bigl(B(\alpha_i,\alpha_j)\bigr)_{i,j}.
$$
If $\alpha'_\bullet$ is another $\mathbb Q$-basis of $N^*(X)_{\mathbb Q}$, then
$$
M_B(\alpha'_\bullet)=P^t M_B(\alpha_\bullet)P
$$
for some $P\in \mathrm{GL}_r(\mathbb Q)$. Hence
$$
\det M_B(\alpha'_\bullet)
=(\det P)^2\det M_B(\alpha_\bullet).
$$
Since $(\det P)^2>0$, the sign of $\det M_B(\alpha_\bullet)$ is independent of the choice of basis. In this paper, we are only interested in the sign of this determinant.  So we will simply write $M_B(X)$ for $M_B(\alpha_\bullet)$.

\begin{lemma}\label{these two are the same}
Let
$$
\overline{\operatorname{ch}}:
K_{\mathrm{num}}(X)_{\mathbb Q}
\xrightarrow{\sim}
N^*(X)_{\mathbb Q}
$$
be the isomorphism in Lemma~\ref{iso}. Let $e_\bullet=(e_1,\dots,e_r)$ be a $\mathbb Z$-basis of $K_{\mathrm{num}}(X)$, and set
$
\alpha_i:=\overline{\operatorname{ch}}(e_i).
$
Then $\alpha_\bullet=(\alpha_1,\dots,\alpha_r)$ is a $\mathbb Q$-basis of $N^*(X)_{\mathbb Q}$, and
$$
\det M_\chi(e_\bullet)=\det M_B(\alpha_\bullet).
$$
In particular,
$
\operatorname{sgn}\det M_\chi(X)=\operatorname{sgn}\det M_B(X).
$
\end{lemma}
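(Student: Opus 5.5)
The point is that the Euler pairing is not literally $B$ transported by $\overline{\operatorname{ch}}$: Hirzebruch--Riemann--Roch inserts the Todd class. The plan is to show that this insertion is a change of basis by a unipotent operator, so it does not affect the determinant.

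First, the basis statement. Since $K_{\mathrm{num}}(X)$ is a finitely generated free abelian group, a $\mathbb Z$-basis $e_\bullet$ of $K_{\mathrm{num}}(X)$ is also a $\mathbb Q$-basis of $K_{\mathrm{num}}(X)_{\mathbb Q}$. Because $\overline{\operatorname{ch}}$ is an isomorphism by Lemma~\ref{iso}, $\alpha_\bullet$ is a $\mathbb Q$-basis of $N^*(X)_{\mathbb Q}$.

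Next I check that everything descends to numerical classes.
\begin{itemize}
\item $B$ is well defined on $N^*(X)_{\mathbb Q}$: the proof of Lemma~\ref{iso} shows that the sign involution $\alpha\mapsto\alpha^\vee$ preserves numerical triviality.
\item Multiplication by any fixed class $\gamma\in\mathrm{CH}^*(X)_{\mathbb Q}$ preserves numerical triviality. Indeed, if $\alpha\equiv_{\mathrm{num}}0$, then $\int_X(\alpha\gamma)\beta=\int_X\alpha(\gamma\beta)=0$ for every $\beta$.
\end{itemize}
Hence multiplication by $\operatorname{td}(X)$ defines a $\mathbb Q$-linear endomorphism $T$ of $N^*(X)_{\mathbb Q}$. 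Likewise $\chi$ descends to $K_{\mathrm{num}}(X)_{\mathbb Q}$, and for $x,y\in K_0(X)_{\mathbb Q}$ Hirzebruch--Riemann--Roch gives
$$
\chi(x,y)=\int_X \operatorname{ch}(x)^\vee\operatorname{ch}(y)\operatorname{td}(X)=B\bigl(\overline{\operatorname{ch}}(x),T\,\overline{\operatorname{ch}}(y)\bigr).
$$
Here I use $\operatorname{ch}(x^\vee)=\operatorname{ch}(x)^\vee$, as in the proof of Lemma~\ref{iso}. Therefore $\chi(e_i,e_j)=B(\alpha_i,T\alpha_j)$.

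Now write $T\alpha_j=\sum_k t_{kj}\alpha_k$ and let $[T]=(t_{kj})$. Bilinearity gives
$$
M_\chi(e_\bullet)=M_B(\alpha_\bullet)\,[T],
\qquad\text{so}\qquad
\det M_\chi(e_\bullet)=\det M_B(\alpha_\bullet)\det[T].
$$
It remains to show $\det T=1$. Since $\operatorname{td}(X)=1+\tau$ with $\tau\in\bigoplus_{i\ge1}\mathrm{CH}^i(X)_{\mathbb Q}$, we have $T=\mathrm{id}+N$, where $N$ is multiplication by $\tau$. The operator $N$ strictly raises codimension on the graded space $N^*(X)_{\mathbb Q}=\bigoplus_{i=0}^{\dim X}N^i(X)_{\mathbb Q}$, so $N^{\dim X+1}=0$. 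Thus $T$ is unipotent, and $\det[T]=\det T=1$, which gives $\det M_\chi(e_\bullet)=\det M_B(\alpha_\bullet)$.

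The sign statement then follows from the basis-independence of $\operatorname{sgn}\det M_B$ established above. The only step needing real care is the descent of multiplication by $\operatorname{td}(X)$ to numerical classes, and that is settled by the associativity argument.
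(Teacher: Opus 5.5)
Your proposal is correct and follows essentially the same route as the paper. Hirzebruch--Riemann--Roch gives $M_\chi(e_\bullet)=M_B(\alpha_\bullet)\,[T]$ with $T$ multiplication by $\operatorname{td}(X)$, and $\det T=1$ because $T$ is the identity plus a codimension-raising operator (the paper calls this block-triangularity with identity diagonal blocks in a homogeneous basis, you call it unipotence); your explicit check that multiplication by $\operatorname{td}(X)$ and the involution $\alpha\mapsto\alpha^\vee$ descend to numerical classes is a detail the paper leaves implicit.
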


\begin{proof}
By the Hirzebruch--Riemann--Roch theorem,  we have
$$
\chi(e_{i},e_{j})=\int_X \alpha_{i}^\vee\alpha_{j}\operatorname{td}(X).
$$
Let
$
T:N^*(X)_{\mathbb Q}\longrightarrow N^*(X)_{\mathbb Q}
$
be multiplication by $\operatorname{td}(X)$.
Let $M_T(\alpha_\bullet)$ be the matrix of $T$ in this basis. Since
$
\chi(e_i,e_j)=B(\alpha_i,T\alpha_j),
$
we get
$$
M_{\chi}(e_\bullet)=M_B(\alpha_\bullet)M_T(\alpha_\bullet).
$$
Taking determinants gives
$$
\det M_{\chi}(e_\bullet)
=
\det M_B(\alpha_\bullet)\det M_T(\alpha_\bullet).
$$

We now show that $\det M_{T}=1.$
Write
$$
\operatorname{td}(X)=1+\operatorname{td}_1(X)+\cdots+\operatorname{td}_d(X),
\qquad
\operatorname{td}_j(X)\in N^j(X)_{\mathbb Q}.
$$
Choose a homogeneous basis of $N^*(X)_{\mathbb Q}$. 
For $\gamma\in N^i(X)_{\mathbb Q}$, we have
$$
T(\gamma)=\gamma\operatorname{td}(X)
=\gamma+\gamma\operatorname{td}_1(X)+\cdots+\gamma\operatorname{td}_{d-i}(X),
$$
where
$
\gamma\operatorname{td}_j(X)\in N^{i+j}(X)_{\mathbb Q}.
$
Thus $T$ preserves the codimension filtration and induces the identity on each graded piece $N^i(X)_{\mathbb Q}$. Hence, in such a basis, the matrix of $T$ is triangular with identity diagonal blocks. 
Since the determinant of a linear operator is independent of the choice of basis, $\operatorname{det}M_{T}(\alpha^{\bullet})=1$. So we have $$\det M_{\chi}(e_\bullet)
=
\det M_B(\alpha_\bullet)\det M_T(\alpha_\bullet)=\operatorname{det}M_{B}(\alpha_{\bullet}). $$
This proves the lemma.
\end{proof}

\begin{proposition}\label{perfect pairing}
Let $d=\dim X$. For $i<d/2$, choose bases of $N^i(X)_{\mathbb Q}$ and $N^{d-i}(X)_{\mathbb Q}$, and let $P_i$ be the matrix of the perfect pairing
$$
N^i(X)_{\mathbb Q}\times N^{d-i}(X)_{\mathbb Q}
\longrightarrow \mathbb Q,
\qquad
(\alpha,\beta)\longmapsto \int_X \alpha\beta .
$$
Then the block of $M_B$ on $N^i(X)_{\mathbb Q}\oplus N^{d-i}(X)_{\mathbb Q}
$
is
$$
\begin{pmatrix}
0 & (-1)^iP_i \\
(-1)^{d-i}P_i^t & 0
\end{pmatrix}.
$$
In particular, if $r_i=\dim_{\mathbb Q}N^i(X)_{\mathbb Q}$, then this block has determinant
$$
(-1)^{(d+1)r_i}\det(P_i)^2.
$$
\end{proposition}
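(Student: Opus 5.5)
The plan is to compute $B$ directly on homogeneous classes, using that $\int_X$ only sees the degree-$d$ component. Take $\alpha\in N^i(X)_{\mathbb Q}$ and $\beta\in N^j(X)_{\mathbb Q}$. Then $\alpha^\vee=(-1)^i\alpha$, so
$$B(\alpha,\beta)=(-1)^i\int_X\alpha\beta .$$
The class $\alpha\beta$ lies in $N^{i+j}(X)_{\mathbb Q}$, and its integral vanishes unless $i+j=d$. Consequently, in a homogeneous basis the matrix $M_B$ is block-structured: the only nonzero blocks pair $N^i$ with $N^{d-i}$. Since $i<d/2$, we have $i\neq d-i$, so the $(N^i,N^i)$ and $(N^{d-i},N^{d-i})$ diagonal blocks of the $N^i\oplus N^{d-i}$ block are zero.

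Next we identify the off-diagonal blocks. Fix bases $a_1,\dots,a_{r_i}$ of $N^i$ and $b_1,\dots,b_{s}$ of $N^{d-i}$, and set $(P_i)_{kl}=\int_X a_kb_l$. Then:
\begin{itemize}
\item $B(a_k,b_l)=(-1)^i(P_i)_{kl}$, which gives the upper-right block $(-1)^iP_i$;
\item $B(b_l,a_k)=(-1)^{d-i}\int_X b_la_k=(-1)^{d-i}(P_i)_{kl}$, using commutativity of the Chow ring; this gives the lower-left block $(-1)^{d-i}P_i^t$.
\end{itemize}

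For the determinant, we need $P_i$ to be square and invertible. This is the only non-formal input, and it is exactly the claim that the pairing $N^i(X)_{\mathbb Q}\times N^{d-i}(X)_{\mathbb Q}\to\mathbb Q$ is perfect. Nondegeneracy on both sides holds by the definition of numerical equivalence: if $\alpha\in N^i$ pairs to zero with all of $N^{d-i}$, it pairs to zero with all of $\mathrm{CH}^*$ by the degree argument above, so $\alpha=0$ in $N^*$; the same argument applies to $N^{d-i}$. Since both spaces are finite-dimensional (Lemma~\ref{iso} together with finite generation of $K_{\mathrm{num}}(X)$), a pairing that is nondegenerate on both sides is perfect. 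Hence $s=r_i$ and $\det P_i\neq 0$.

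Finally, the determinant of $\begin{pmatrix}0&A\\ C&0\end{pmatrix}$ with $r\times r$ blocks is $(-1)^{r^2}\det A\det C=(-1)^{r}\det A\det C$. Here this equals
$$(-1)^{r_i}(-1)^{ir_i}(-1)^{(d-i)r_i}\det(P_i)^2=(-1)^{(d+1)r_i}\det(P_i)^2.$$
The main point to check carefully is this sign bookkeeping: the factor $(-1)^{r^2}$ comes from the $r^2$ transpositions needed to swap the two column blocks. Everything else is formal.
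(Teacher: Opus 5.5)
Your proposal is correct and follows the paper's proof: you compute $B(\alpha,\beta)=(-1)^i\int_X\alpha\beta$ on homogeneous classes, observe that it vanishes unless the codimensions sum to $d$, and read off the off-diagonal blocks $(-1)^iP_i$ and $(-1)^{d-i}P_i^t$. You also justify perfectness of the pairing and carry out the $(-1)^{r_i^2}$ sign bookkeeping for the block determinant; the paper leaves both implicit, and you handle both correctly.
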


\begin{proof}
For
$$
\alpha\in N^i(X)_{\mathbb Q},
\qquad
\beta\in N^j(X)_{\mathbb Q},
$$
we have
$$
B(\alpha,\beta)
=
\int_X \alpha^\vee\beta
=
(-1)^i\int_X \alpha\beta .
$$
This is zero unless $i+j=d$. Hence $B$ pairs $N^i(X)_{\mathbb Q}$ only with $N^{d-i}(X)_{\mathbb Q}$. Therefore the off-diagonal blocks are
$$
(-1)^iP_i
\qquad\text{and}\qquad
(-1)^{d-i}P_i^t,
$$
which gives the claimed matrix.

\end{proof}

We now prove that $\det M_{\chi}(X)>0$ for every odd-dimensional variety.
\begin{theorem}\label{odd case}
Suppose that $d=\dim X$ is odd. Then
$
\det M_\chi(X)>0.
$
\end{theorem}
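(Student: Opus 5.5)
By Lemma~\ref{these two are the same}, it suffices to show that $\det M_B(X)>0$. We choose a homogeneous $\mathbb Q$-basis of $N^*(X)_{\mathbb Q}=\bigoplus_{i=0}^d N^i(X)_{\mathbb Q}$, adapted to the grading. Since $d$ is odd, there is no middle degree $d/2$. The indices $\{0,\dots,d\}$ therefore split into the disjoint pairs $\{i,d-i\}$ with $0\le i<d/2$.

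By the computation in the proof of Proposition~\ref{perfect pairing}, $B(\alpha,\beta)=0$ for homogeneous $\alpha\in N^i$ and $\beta\in N^j$ unless $i+j=d$. After ordering the basis as $N^0,N^d,N^1,N^{d-1},\dots$, the matrix $M_B$ is block diagonal. The blocks are indexed by $i<d/2$, and each block is the matrix on $N^i(X)_{\mathbb Q}\oplus N^{d-i}(X)_{\mathbb Q}$ described in Proposition~\ref{perfect pairing}. Reordering the basis is a permutation $P$, so it changes $\det M_B$ by $(\det P)^2=1$ and does not affect the sign.

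Proposition~\ref{perfect pairing} gives each block determinant as $(-1)^{(d+1)r_i}\det(P_i)^2$. Since $d$ is odd, $d+1$ is even, so this equals $\det(P_i)^2$. The pairing $N^i\times N^{d-i}\to\mathbb Q$ is perfect by the definition of numerical equivalence, so $\det P_i\neq 0$ and each block determinant is strictly positive. Hence $\det M_B(X)=\prod_{i<d/2}\det(P_i)^2>0$, and therefore $\det M_\chi(X)>0$.

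The only point needing care is the perfectness of each $P_i$. This means that $r_i=r_{d-i}$, so that $P_i$ is square, and that $P_i$ is nondegenerate. Both follow because $N^*(X)$ is by definition the quotient by the radical of the intersection pairing. Since that pairing is graded of degree $-d$, its radical is the direct sum of the radicals of the pairings $N^i\times N^{d-i}$. Each $P_i$ is then nondegenerate on both sides, and finite-dimensionality forces $r_i=r_{d-i}$. Apart from this, the argument is bookkeeping.
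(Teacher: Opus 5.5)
Your proof is correct and follows the paper's argument essentially verbatim: you reduce to $\det M_B(X)$ via Lemma~\ref{these two are the same}, split $M_B$ into the blocks on $N^i(X)_{\mathbb Q}\oplus N^{d-i}(X)_{\mathbb Q}$ from Proposition~\ref{perfect pairing}, and use that $(-1)^{(d+1)r_i}=1$ because $d+1$ is even. Your extra check that each $P_i$ is square and nondegenerate is a useful detail that the paper takes for granted. It works because the radical of the intersection pairing splits degreewise, since only complementary degrees pair nontrivially.
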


\begin{proof}
Write
$
d=2m+1.
$
Choose a homogeneous basis of $N^*(X)_{\mathbb Q}$, ordered according to the decomposition
$$
N^*(X)_{\mathbb Q}
=
\bigoplus_{i=0}^m
\left(N^i(X)_{\mathbb Q}\oplus N^{d-i}(X)_{\mathbb Q}\right).
$$
Since $d$ is odd, there is no middle term. By Proposition~\ref{perfect pairing}, the block of $M_B$ on
$$
N^i(X)_{\mathbb Q}\oplus N^{d-i}(X)_{\mathbb Q}
$$
has determinant
$$
(-1)^{(d+1)r_i}\det(P_i)^2,
\qquad
r_i=\dim_{\mathbb Q}N^i(X)_{\mathbb Q}.
$$
But
$
d+1=2m+2
$
is even, hence
$
(-1)^{(d+1)r_i}=1.
$
Therefore every block has positive determinant:
$$
(-1)^{(d+1)r_i}\det(P_i)^2
=
\det(P_i)^2>0.
$$
Thus
$
\det M_B>0.
$
By Lemma~\ref{these two are the same}, we have
$$
\operatorname{sgn}\det M_\chi(X)
=
\operatorname{sgn}\det M_B(X).
$$
Hence $\det M_\chi(X)>0.$
This proves the theorem.
\end{proof}

\begin{remark}
There is also a conceptual way to understand Theorem~\ref{odd case}. When $\dim X$ is odd, the bilinear form $B$ is skew-symmetric. Hence the determinant of a matrix $M_B$ representing $B$ is the square of its Pfaffian, and is therefore non-negative. Since $B$ is nondegenerate, its determinant is nonzero, and hence positive. Together with Lemma~\ref{these two are the same}, this gives another proof of the positivity of $\det M_\chi(X)$.
\end{remark}

When $\dim X$ is even, we cannot directly show that $\det M_{\chi}(X)>0$ as in Theorem~\ref{odd case}. To deal with this case, we first recall the numerical Hodge standard conjecture.

\paragraph{Numerical Hodge standard conjecture.}\label{numerical}
 Let $X$ be a smooth projective variety over $k$ of dimension $\dim X=d,$
and let $L\in N^1(X)_{\mathbb Q}$
be the class of an ample divisor. For $0\leq i\leq d/2$, set
$$
P^i_{\mathrm{num}}(X,L)
:=
\ker\left(
L^{d-2i+1}:N^i(X)_{\mathbb Q}
\longrightarrow
N^{d-i+1}(X)_{\mathbb Q}
\right).
$$
We say that $X$ satisfies the \emph{numerical Hodge standard conjecture} if, for every $0\leq i\leq d/2$, the form
$$
Q_i(\alpha,\beta):=
(-1)^i\int_X L^{d-2i}\alpha\beta
$$
is positive definite on
$P^i_{\mathrm{num}}(X,L).$

This is the standard conjecture of Hodge type, formulated for numerical cycle groups; 
see Grothendieck's original paper on the standard conjectures
\cite{Grothendieck1969StandardConjectures}, Kleiman's survey
\cite{steven1994standard}, and Milne's discussion of polarizations and the Hodge standard conjecture
\cite{milne2002polarizations}.

\begin{proposition}\label{Hodge}
Assume that $X$ satisfies the numerical Hodge standard conjecture. Then numerical hard Lefschetz holds: for $0\leq i\leq d/2$,
$$
L^{d-2i}:N^i(X)_{\mathbb Q}\xrightarrow{\sim}N^{d-i}(X)_{\mathbb Q}.
$$
Consequently, we have the numerical Lefschetz decomposition
$$
N^i(X)_{\mathbb Q}
=
LN^{i-1}(X)_{\mathbb Q}\oplus P^i_{\mathrm{num}}(X,L).
$$

\end{proposition}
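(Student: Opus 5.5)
Since $N^i(X)_{\mathbb Q}$ and $N^{d-i}(X)_{\mathbb Q}$ are in perfect duality via the intersection pairing (the numerical radical has been quotiented out, so the pairing $N^i\times N^{d-i}\to\mathbb Q$ is nondegenerate on both sides), we have $\dim N^i(X)_{\mathbb Q}=\dim N^{d-i}(X)_{\mathbb Q}$. It therefore suffices to show that $L^{d-2i}:N^i(X)_{\mathbb Q}\to N^{d-i}(X)_{\mathbb Q}$ is injective. The plan is to prove injectivity, and then the decomposition, by induction on $i$, using positivity of $Q_i$ on primitive classes. Note that $L^{d-2i}$ is well defined on numerical classes: if $\alpha\equiv_{\mathrm{num}}0$, then $\int_X L^{d-2i}\alpha\beta=0$ for all $\beta$, so $L^{d-2i}\alpha$ is numerically trivial.

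The induction statement for $0\le i\le d/2$ is twofold: (a) $L^{d-2i}$ is injective on $N^i(X)_{\mathbb Q}$; (b) $N^i(X)_{\mathbb Q}=LN^{i-1}(X)_{\mathbb Q}\oplus P^i_{\mathrm{num}}(X,L)$, with the convention $N^{-1}=0$.

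For (b), assume (a) holds in degree $i-1$. Then $L^{d-2i+2}:N^{i-1}\to N^{d-i+1}$ is an isomorphism. Given $\alpha\in N^i$, choose $\gamma\in N^{i-1}$ with $L^{d-2i+2}\gamma=L^{d-2i+1}\alpha$. Then $\alpha-L\gamma\in P^i_{\mathrm{num}}$, so $N^i=LN^{i-1}+P^i_{\mathrm{num}}$. For the intersection, suppose $L\gamma\in P^i_{\mathrm{num}}$. Then $L^{d-2i+2}\gamma=0$, so $\gamma=0$ by injectivity. For $i=0$ we have $P^0_{\mathrm{num}}=N^0$, and there is nothing to prove.

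For (a), iterate (b) to write any $\alpha\in N^i$ as $\alpha=\sum_{j\le i}L^{i-j}\pi_j$ with $\pi_j\in P^j_{\mathrm{num}}$. Suppose $L^{d-2i}\alpha=0$. Pair it with $L^{i-j}\pi'_j$ for $\pi'_j\in P^j_{\mathrm{num}}$. The key orthogonality is that $\int_X L^{d-2i}L^{i-k}\pi_k\,L^{i-j}\pi'_j=0$ for $k\neq j$. Indeed, if $k<j$, the total power of $L$ on $\pi_k$ is $d-2i+(i-k)+(i-j)=d-k-j\ge d-2k+1$, so $\pi_k$ is killed by the defining kernel condition of $P^k_{\mathrm{num}}$; the case $k>j$ is symmetric. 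Hence the pairing reduces to $\int_X L^{d-2j}\pi_j\pi'_j=(-1)^jQ_j(\pi_j,\pi'_j)$. Taking $\pi'_j=\pi_j$ and using positive definiteness of $Q_j$ gives $\pi_j=0$ for every $j$, so $\alpha=0$.

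The main obstacle is organizing the induction so that (b) in degree $i$ only uses (a) in degree $i-1$, while (a) in degree $i$ uses (b) in all degrees $\le i$. The order is: first (a) for $i=0$, which holds since $\int_X L^d>0$; then alternate (b) for $i$ and (a) for $i$. A secondary point to check is that $d-2j\ge0$ in the orthogonality exponents, which holds because $j\le i\le d/2$.
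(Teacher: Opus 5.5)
Your proof is correct, but for the hard Lefschetz part you take a considerably longer route than the paper. The paper needs neither the induction nor the decomposition to prove injectivity. If $L^{d-2i}\alpha=0$, then trivially $L^{d-2i+1}\alpha=0$, so $\alpha$ already lies in $P^i_{\mathrm{num}}(X,L)$. Then $Q_i(\alpha,\alpha)=(-1)^i\int_X (L^{d-2i}\alpha)\,\alpha=0$ forces $\alpha=0$.

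That argument uses positivity of $Q_i$ in the single degree $i$. Your argument (write $\alpha=\sum_{j\le i}L^{i-j}\pi_j$, use orthogonality of the summands, then positivity of every $Q_j$ with $j\le i$) needs the conjecture in all degrees up to $i$. That dependence is what forces your interleaved induction. Once you see that $\ker L^{d-2i}\subseteq P^i_{\mathrm{num}}(X,L)$, hard Lefschetz holds in every degree directly, and the decomposition follows afterwards with no induction.

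Your part (b) agrees with the paper in substance. The paper gets the sum from a dimension count: $\dim LN^{i-1}(X)_{\mathbb Q}=r_{i-1}$, and $\dim P^i_{\mathrm{num}}(X,L)=r_i-r_{i-1}$ by surjectivity of $L^{d-2i+1}$. You instead construct the splitting $\alpha=L\gamma+(\alpha-L\gamma)$ explicitly. Your orthogonality computation is essentially the one the paper uses later in Theorem~\ref{even case}.

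One slip there: for $k<j$ the inequality $d-k-j\ge d-2k+1$ is false; it holds exactly when $k>j$. The correct inequality is $d-k-j\ge d-2j+1$, so it is the class of larger index that is annihilated, via $L^{d-k-j}\pi_k\pi'_j=(L^{d-2j+1}\pi'_j)(L^{j-k-1}\pi_k)$. The orthogonality claim itself is true, so your argument goes through after this fix.
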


\begin{proof}
First prove injectivity of $L^{d-2i}.$
Suppose
$
L^{d-2i}\alpha=0.
$
Then automatically
$
L^{d-2i+1}\alpha=0,
$
so $\alpha\in P^i_{\mathrm{num}}(X,L)$. Hence
$$
Q_i(\alpha,\alpha)
=
(-1)^i\int_X L^{d-2i}\alpha^2
=
0.
$$
By positive definiteness on $P^i_{\mathrm{num}}(X,L)$, this implies $\alpha=0$. Thus the map is injective. The source and target have the same dimension by the perfect numerical intersection pairing, so the map is an isomorphism.

Next we prove the Lefschetz decomposition. For $0\leq i\leq d/2$, we claim
$$
N^i(X)_{\mathbb Q}
=
LN^{i-1}(X)_{\mathbb Q}\oplus P^i_{\mathrm{num}}(X,L),
$$
where $N^{-1}(X)_{\mathbb Q}=0$. The intersection is zero. Indeed, if
$
L\gamma\in P^i_{\mathrm{num}}(X,L),
$
then
$$
0=L^{d-2i+1}(L\gamma)=L^{d-2i+2}\gamma.
$$
But
$$
L^{d-2i+2}:N^{i-1}(X)_{\mathbb Q}\longrightarrow N^{d-i+1}(X)_{\mathbb Q}
$$
is an isomorphism by numerical hard Lefschetz, so $\gamma=0$.

It remains to check dimensions.  Since
$$
L^{d-2i+2}=L^{d-2i+1}\circ L:N^{i-1}(X)_{\mathbb Q}\xrightarrow{\sim}N^{d-i+1}(X)_{\mathbb Q}
$$
is an isomorphism, we have $L: N^{i-1}(X)_{\mathbb{Q}}\to N^{i}(X)_{\mathbb{Q}}$ is injective and $L^{d-2i+1}: N^i(X)_{\mathbb Q}\longrightarrow N^{d-i+1}(X)_{\mathbb Q}$ is surjective. Therefore $
\dim_{\mathbb Q} LN^{i-1}(X)_{\mathbb Q}=r_{i-1}
$, and 
$$
\dim P^i_{\mathrm{num}}(X,L)=\operatorname{dim}N^{i}(X)_{\mathbb{Q}}-\operatorname{dim}N^{d-i+1}(X)_{\mathbb{Q}}= r_i-r_{d-i+1}=r_{i}-r_{i-1}.
$$
 Thus
$$
N^i(X)_{\mathbb Q}
=
LN^{i-1}(X)_{\mathbb Q}\oplus P^i_{\mathrm{num}}(X,L).
$$

\end{proof}

We can now prove that $\det M_{\chi}(X)>0$ for even-dimensional $X$ under the assumption that $X$ satisfies the numerical Hodge standard conjecture.

\begin{theorem}\label{even case}
Let $X$ be a smooth projective variety of dimension $2m$. Suppose that $X$ satisfies the numerical Hodge standard conjecture. Then
$ \det M_\chi(X)>0.$
\end{theorem}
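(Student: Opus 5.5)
We would prove Theorem~\ref{even case} by computing the sign of $\det M_B(X)$ and then applying Lemma~\ref{these two are the same}. Write $d=2m$ and choose a homogeneous basis of $N^*(X)_{\mathbb Q}$ ordered according to
$$
N^*(X)_{\mathbb Q}=\bigoplus_{i<m}\bigl(N^i(X)_{\mathbb Q}\oplus N^{d-i}(X)_{\mathbb Q}\bigr)\ \oplus\ N^m(X)_{\mathbb Q}.
$$
Since $B$ pairs $N^i$ only with $N^{d-i}$, the matrix $M_B$ is block diagonal in this ordering. By Proposition~\ref{perfect pairing}, the block for $i<m$ has determinant $(-1)^{(d+1)r_i}\det(P_i)^2$. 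Because $d+1$ is odd, this equals $(-1)^{r_i}\det(P_i)^2$. The middle block is the matrix of the symmetric form $(-1)^m\int_X\alpha\beta$ on $N^m(X)_{\mathbb Q}$. So it remains to determine the sign of that symmetric form's determinant.

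For the middle block we use Proposition~\ref{Hodge}. Iterating the Lefschetz decomposition gives
$$
N^m(X)_{\mathbb Q}=\bigoplus_{j=0}^m L^{m-j}P^j_{\mathrm{num}}(X,L),
$$
and hard Lefschetz makes $L^{m-j}$ injective on $P^j_{\mathrm{num}}$, so the $j$-th summand has dimension $p_j=r_j-r_{j-1}$.

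We next check that these summands are orthogonal for $\int_X$. Take $\alpha\in P^j_{\mathrm{num}}$ and $\beta\in P^k_{\mathrm{num}}$ with $j<k$. Then
$$
\int_X L^{m-j}\alpha\, L^{m-k}\beta=\int_X L^{2m-j-k}\alpha\beta .
$$
This vanishes because $2m-j-k\ge 2m-2k+1$ and $L^{2m-2k+1}\beta=0$.

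On the $j$-th summand, $(-1)^m\int_X L^{m-j}\alpha\,L^{m-j}\beta=(-1)^{m+j}Q_j(\alpha,\beta)$. By the numerical Hodge standard conjecture, this form is $(-1)^{m+j}$ times a positive definite form. Hence the middle block has determinant of sign $(-1)^{\sum_j (m+j)p_j}$.

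The main step, and the only real obstacle, is the parity bookkeeping. The total sign of $\det M_B$ is $(-1)^E$ with
$$
E=\sum_{i<m} r_i+\sum_{j=0}^m (m-j)p_j \pmod 2,
$$
using $m+j\equiv m-j$. Telescoping with $p_j=r_j-r_{j-1}$, each $r_j$ with $j<m$ gets coefficient $(m-j)-(m-j-1)=1$, so $\sum_{j=0}^m (m-j)p_j=\sum_{j<m}r_j$. Thus $E=2\sum_{i<m}r_i$ is even.

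It remains to rule out a zero determinant. Each $\det P_i\neq 0$ because the pairing in Proposition~\ref{perfect pairing} is perfect, and the middle form is nondegenerate, being a signed sum of definite forms. Hence $\det M_B(X)>0$, and Lemma~\ref{these two are the same} gives $\det M_\chi(X)>0$.
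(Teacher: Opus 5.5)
Your proposal is correct and follows essentially the same route as the paper. Both arguments reduce to $\det M_B$ via Lemma~\ref{these two are the same}, take the sign $(-1)^{r_i}$ for each non-middle block, and split the middle block into mutually orthogonal Lefschetz summands that are alternately definite. Your telescoping identity $\sum_{j}(m-j)p_j=\sum_{j<m}r_j$ is an exact version of the paper's mod-$2$ count of negative directions, and your explicit nonvanishing check is left implicit in the paper.
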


\begin{proof}
By Lemma~\ref{these two are the same}, it is enough to prove that
$
\det M_B>0.
$

First consider the non-middle blocks. For $0\leq i\leq m-1$, Proposition~\ref{perfect pairing} gives the block on
$
N^i(X)_{\mathbb Q}\oplus N^{2m-i}(X)_{\mathbb Q}
$
with determinant sign
$$
(-1)^{(2m+1)r_i}=(-1)^{r_i}.
$$
Therefore the product of the signs of all non-middle blocks is
$
(-1)^{r_0+\cdots+r_{m-1}}.
$

It remains to compute the sign of the middle block on
$ N^m(X)_{\mathbb Q}.$
On this space,
$$
B(\alpha,\beta)=(-1)^m\int_X \alpha\beta.
$$
Iterating the numerical Lefschetz decomposition at $i=m$ in Proposition~\ref{Hodge} gives
$$
N^m(X)_{\mathbb Q}
=
\bigoplus_{i=0}^m L^{m-i}P^i_{\mathrm{num}}(X,L).
$$
Let
$
p_i:=\dim_{\mathbb Q}P^i_{\mathrm{num}}(X,L),$ then
$
p_i=r_i-r_{i-1}.
$

The Lefschetz summands are mutually orthogonal with respect to $B$. Indeed, suppose that  $i<j$, $\alpha\in P^i_{\mathrm{num}}(X,L)$, and $\beta\in P^j_{\mathrm{num}}(X,L)$. Since
$
\beta\in P^j_{\mathrm{num}}(X,L),
$
we have
$
L^{2m-2j+1}\beta=0.
$
Moreover,
$
2m-i-j=(2m-2j+1)+(j-i-1),
$
and $j-i-1\geq 0$. Therefore
$$
L^{2m-i-j}\alpha\beta
=
(L^{2m-2j+1}\beta)(L^{j-i-1}\alpha)
=
0.
$$
Hence
$$
B(L^{m-i}\alpha,L^{m-j}\beta)
=
(-1)^m\int_X L^{2m-i-j}\alpha\beta
=
0.
$$

On the summand $L^{m-i}P^i_{\mathrm{num}}(X,L)$, we have
$$
B(L^{m-i}\alpha,L^{m-i}\alpha)
=
(-1)^m\int_X L^{2m-2i}\alpha^2=
(-1)^{m-i}
\left((-1)^i\int_X L^{2m-2i}\alpha^2\right) .
$$
The expression in parentheses is
$
Q_i(\alpha,\alpha),
$
which is positive definite on $P^i_{\mathrm{num}}(X,L)$. Hence the summand $L^{m-i}P^i_{\mathrm{num}}(X,L)$ is positive definite for $B$ if $m-i$ is even, and negative definite if $m-i$ is odd.

Therefore the number of negative directions of the middle block is
$$
s=\sum_{\substack{0\leq i\leq m\\ m-i\ \mathrm{odd}}}p_i.
$$
Using
$
p_i=r_i-r_{i-1},
$
we get, modulo $2$,
$
s\equiv r_0+r_1+\cdots+r_{m-1}.
$
Thus the sign of the determinant of the middle block is
$$
(-1)^s
=
(-1)^{r_0+\cdots+r_{m-1}}.
$$

This is exactly the same sign as the product of the non-middle blocks. Therefore the total sign of $\det M_B(X)$ is
$$
(-1)^{r_0+\cdots+r_{m-1}}
\cdot
(-1)^{r_0+\cdots+r_{m-1}}
=
+1.
$$
 This implies
$
\det M_B(X)>0.
$
This proves the even-dimensional case.
\end{proof}

\begin{remark}\label{hold case}
We have the following known cases.
\begin{itemize}
    \item When $\dim X=2$, the numerical Hodge standard conjecture is just the Hodge index theorem, which is known. 
    \item When $\operatorname{Perf}(X)$ admits a full exceptional collection, we have $\det M_{\chi}(X)=1$.
    \item The numerical Hodge standard conjecture also holds in the following cases.

In characteristic zero:
\begin{enumerate}
    \item The numerical Hodge standard conjecture is stable under products and hyperplane sections; see \cite[Proposition 4.3.(1)]{steven1994standard}.

    \item $\dim X\leq 4$; see \cite[Corollary 1]{lieberman1968numerical}.

    \item $X$ is an abelian variety; see \cite[Theorem 4]{lieberman1968numerical}.

    \item $X$ is a smooth projective toric variety; see \cite[Page 106]{fulton1993introduction}.

    \item $X$ is a generalized flag variety; see \cite[Proposition 4.3.(2)]{steven1994standard}.

    \item $X$ is motivated by a curve or a surface. In particular, this applies when $X$ is a uniruled threefold or a unirational fourfold; see \cite[Corollary 4.3]{arapura2006motivation}.

    \item $X=S^{[n]}$, the Hilbert scheme of $n$ points on a smooth projective surface $S$; see \cite[Theorem 6.2.1]{de2002chow} and \cite[Corollary 4.3]{arapura2006motivation}.

    \item $X$ is a smooth projective variety of $K3^{[n]}$-type; see \cite[Corollary 1.2]{charles2013standard}.

    \item $X$ is a certain hyperk\"ahler manifold of O'Grady's 10-dimensional deformation type; see \cite[Corollary 1.9]{floccari2021motive}.

    \item $X$ is a projective irreducible holomorphic symplectic manifold of generalized Kummer deformation type of dimension $2n$, where $n+1$ is prime; see \cite[Corollary 1.2]{Foster2024KummerLSC}.
\end{enumerate}

In positive characteristic, fewer cases are known:
\begin{enumerate}
    \item $X$ is a smooth projective toric variety; see \cite[Theorem~8.2]{mcmullen1993simple}.

    \item $X$ is an abelian fourfold; see \cite[Theorem~1.3]{giuseppe2021standard}.

    \item $X=A\times A$ or $X=A\times E$ for an elliptic curve $E$ and certain abelian varieties $A$; see \cite[Theorem~1.1]{koshikawa2024numerical}.

    \item $X=S\times S$ for a K3 surface $S$; see \cite[Corollary~1.4]{ito2025hodge}. More generally, $X=S^{\times n}$ for a K3 surface $S$ with $\rho(S)\geq 17$; see \cite[Theorem~1.9]{ito2025hodge}.

    \item $X$ belongs to a certain class of  abelian varieties over $\overline{\mathbb F}_p$; see \cite[Theorem~1.1]{agugliaro2026examples}.
\end{enumerate}

\end{itemize}

\end{remark}

\bibliographystyle{emss} 
\bibliography{ref}

@article{orlov2020finite,
  title={Finite-dimensional differential graded algebras and their geometric realizations},
  author={Orlov, Dmitri},
  journal={Advances in Mathematics},
  volume={366},
  pages={107096},
  year={2020},
  publisher={Elsevier}
}

@article{eilenberg1954algebras,
  title={Algebras of cohomologically finite dimension},
  author={Eilenberg, Samuel},
  journal={Commentarii Mathematici Helvetici},
  volume={28},
  number={1},
  pages={310--319},
  year={1954},
  publisher={Springer}
}

@book{FultonIntersectionTheory,
  author    = {Fulton, William},
  title     = {Intersection Theory},
  edition   = {2},
  series    = {Ergebnisse der Mathematik und ihrer Grenzgebiete. 3. Folge},
  volume    = {2},
  publisher = {Springer},
  address   = {Berlin},
  year      = {1998},
  isbn      = {978-0-387-98549-7},
  doi       = {10.1007/978-1-4612-1700-8}
}

@article{milne2002polarizations,
  title={Polarizations and {Grothendieck}'s standard conjectures},
  author={Milne, James Stuart},
  journal={Annals of mathematics},
  pages={599--610},
  year={2002},
  publisher={JSTOR}
}

@incollection{Grothendieck1969StandardConjectures,
  author    = {Grothendieck, Alexander},
  title     = {Standard conjectures on algebraic cycles},
  booktitle = {Algebraic Geometry (Internat. Colloq., Tata Inst. Fund. Res., Bombay, 1968)},
  pages     = {193--199},
  publisher = {Oxford University Press},
  address   = {London},
  year      = {1969}
}

@article{lieberman1968numerical,
  title={Numerical and homological equivalence of algebraic cycles on {Hodge} manifolds},
  author={Lieberman, David I},
  journal={American Journal of Mathematics},
  volume={90},
  number={2},
  pages={366--374},
  year={1968},
  publisher={JSTOR}
}

@incollection{steven1994standard,
  author    = {Kleiman, Steven L.},
  title     = {The Standard Conjectures},
  booktitle = {Motives},
  series    = {Proceedings of Symposia in Pure Mathematics},
  volume    = {55},
  part      = {1},
  pages     = {3--20},
  publisher = {American Mathematical Society},
  address   = {Providence, RI},
  year      = {1994},
  note      = {Seattle, WA, 1991}
}

@book{fulton1993introduction,
  author    = {Fulton, William},
  title     = {Introduction to Toric Varieties},
  series    = {Annals of Mathematics Studies},
  volume    = {131},
  publisher = {Princeton University Press},
  address   = {Princeton, NJ},
  year      = {1993}
}

@article{de2002chow,
  author  = {de Cataldo, Mark Andrea A. and Migliorini, Luca},
  title   = {The {Chow} Groups and the Motive of the {Hilbert} Scheme of Points on a Surface},
  journal = {Journal of Algebra},
  volume  = {251},
  pages   = {824--848},
  year    = {2002},
  doi     = {10.1006/jabr.2001.9105}
}

@article{arapura2006motivation,
  title={Motivation for Hodge cycles},
  author={Arapura, Donu},
  journal={Advances in Mathematics},
  volume={207},
  number={2},
  pages={762--781},
  year={2006},
  publisher={Elsevier}
}

@article{charles2013standard,
  title={The standard conjectures for holomorphic symplectic varieties deformation equivalent to {Hilbert} schemes of {K}3 surfaces},
  author={Charles, Fran{\c{c}}ois and Markman, Eyal},
  journal={Compositio Mathematica},
  volume={149},
  number={3},
  pages={481--494},
  year={2013},
  publisher={London Mathematical Society}
}

@article{Foster2024KummerLSC,
  author       = {Foster, Josiah},
  title        = {The {Lefschetz} standard conjectures for {IHSMs} of generalized {Kummer} deformation type in certain degrees},
  journal      = {European Journal of Mathematics},
  volume       = {10},
  year         = {2024},
  article      = {34},
  doi          = {10.1007/s40879-024-00744-2}
}

@article{floccari2021motive,
  title={On the motive of {O'Grady}'s ten-dimensional hyper-{K{\"a}hler} varieties},
  author={Floccari, Salvatore and Fu, Lie and Zhang, Ziyu},
  journal={Communications in Contemporary Mathematics},
  volume={23},
  number={04},
  pages={2050034},
  year={2021},
  publisher={World Scientific}
}

@article{giuseppe2021standard,
  author  = {Ancona, Giuseppe},
  title   = {Standard Conjectures for Abelian Fourfolds},
  journal = {Inventiones Mathematicae},
  volume  = {223},
  pages   = {149--212},
  year    = {2021},
  doi     = {10.1007/s00222-020-00990-7}
}

@article{mcmullen1993simple,
  title={On simple polytopes},
  author={McMullen, Peter},
  journal={Inventiones mathematicae},
  volume={113},
  number={1},
  pages={419--444},
  year={1993},
  publisher={Springer}
}

@article{koshikawa2024numerical,
  author  = {Koshikawa, Teruhisa},
  title   = {The Numerical {Hodge} Standard Conjecture for the Square of a Simple Abelian Variety of Prime Dimension},
  journal = {Manuscripta Mathematica},
  volume  = {173},
  pages   = {1161--1169},
  year    = {2024},
  doi     = {10.1007/s00229-023-01482-7}
}

@article{ito2025hodge,
  title={The {Hodge} standard conjecture for self-products of {K}3 surfaces},
  author={Ito, Kazuhiro and Ito, Tetsushi and Koshikawa, Teruhisa},
  journal={Journal of Algebraic Geometry},
  volume={34},
  number={2},
  pages={299--330},
  year={2025}
}

@article{agugliaro2026examples,
  title={Examples for the standard conjecture of Hodge type},
  author={Agugliaro, Thomas},
  journal={Documenta Mathematica},
  year={2026}
}

@book{tabuada2015noncommutative,
  author    = {Tabuada, Gon{\c{c}}alo},
  title     = {Noncommutative Motives},
  series    = {University Lecture Series},
  volume    = {63},
  publisher = {American Mathematical Society},
  address   = {Providence, RI},
  year      = {2015},
  pages     = {114},
  isbn      = {978-1-4704-2397-1},
  doi       = {10.1090/ulect/063},
  note      = {With a preface by Yuri I. Manin}
}

@article{tabuada2016noncommutative,
  title={Noncommutative motives of separable algebras},
  author={Tabuada, Gon{\c{c}}alo and Van den Bergh, Michel},
  journal={Advances in Mathematics},
  volume={303},
  pages={1122--1161},
  year={2016},
  publisher={Elsevier}
}

@article{tabuada2019noncommutative,
  title={Noncommutative motives in positive characteristic and their applications},
  author={Tabuada, Gon{\c{c}}alo},
  journal={Advances in mathematics},
  volume={349},
  pages={648--681},
  year={2019},
  publisher={Elsevier}
}

@article{zacharia1983cartan,
  title={On the {Cartan} matrix of an {Artin} algebra of global dimension two},
  author={Zacharia, Dan},
  journal={Journal of Algebra},
  volume={82},
  number={2},
  pages={353--357},
  year={1983},
  publisher={Academic Press}
}

@article{wilson1983cartan,
author  = {Wilson, G. V.},
title   = {The {Cartan} map on categories of graded modules},
journal = {Journal of Algebra},
volume  = {85},
number  = {2},
year    = {1983},
pages   = {390--398}
}

@article{burgess1989quasi,
author  = {Burgess, W. D. and Fuller, K. R.},
title   = {On quasi-hereditary rings},
journal = {Proceedings of the American Mathematical Society},
volume  = {106},
number  = {2},
year    = {1989},
pages   = {321--328}
}

@article{burgess1985cartan,
author  = {Burgess, W. D. and Fuller, K. R. and Voss, E. R. and Zimmermann-Huisgen, B.},
title   = {The {Cartan} matrix as an indicator of finite global dimension for artinian rings},
journal = {Proceedings of the American Mathematical Society},
volume  = {95},
number  = {2},
year    = {1985},
pages   = {157--165}
}

@article{saorin1998monoid,
author  = {Saor{\'i}n, M.},
title   = {Monoid gradings on algebras and the {Cartan} determinant conjecture},
journal = {Proceedings of the Edinburgh Mathematical Society},
volume  = {41},
number  = {3},
year    = {1998},
pages   = {539--551}
}

@article{qin2016reducing,
  title={Reducing homological conjectures by n-recollements},
  author={Qin, Yongyun and Han, Yang},
  journal={Algebras and Representation Theory},
  volume={19},
  number={2},
  pages={377--395},
  year={2016},
  publisher={Springer}
}

@article{green2021algebras,
  title={Algebras and varieties},
  author={Green, Edward L and Hille, Lutz and Schroll, Sibylle},
  journal={Algebras and Representation Theory},
  volume={24},
  number={2},
  pages={367--388},
  year={2021},
  publisher={Springer}
}

@article{ingalls2020homological,
  title={Homological behavior of idempotent subalgebras and {Ext} algebras},
  author={Ingalls, Colin and Paquette, Charles},
  journal={Science China Mathematics},
  volume={63},
  number={2},
  pages={309--320},
  year={2020},
  publisher={Springer}
}

@article{kuznetsov2018derived,
  author  = {Kuznetsov, Alexander and Perry, Alexander},
  title   = {Derived categories of {Gushel--Mukai} varieties},
  journal = {Compositio Mathematica},
  volume  = {154},
  number  = {7},
  pages   = {1362--1406},
  year    = {2018},
  doi     = {10.1112/S0010437X18007091}
}

@article{vial2017exceptional,
  author  = {Vial, Charles},
  title   = {Exceptional collections, and the {N}{\'e}ron--{S}everi lattice for surfaces},
  journal = {Advances in Mathematics},
  volume  = {305},
  pages   = {895--934},
  year    = {2017},
  doi     = {10.1016/j.aim.2016.10.012}
}

\end{document}